\documentclass{article}
\usepackage{graphicx} 
\usepackage{amsmath}
\usepackage{amsfonts}
\usepackage{amssymb}
\usepackage{xcolor}
\usepackage{soul}
\usepackage{amsthm}
\usepackage{hyperref}
\usepackage{enumerate}
\usepackage[a4paper, total={6in, 8in}]{geometry}

\def\R{\mathbb{R}}

\def\H{\mathbb{H}}
\def\S{\mathbb{S}}
\def\V{\mathcal{V}}
\def\I{\mathcal{I}}
\def\st{\, | \,}
\def\dt{\dotsc}

{\theoremstyle{plain}
\newtheorem{thm}{Theorem}[section]
\newtheorem{lemma}[thm]{Lemma}
\newtheorem{cor}[thm]{Corollary}
\newtheorem{prop}[thm]{Proposition}

\newtheorem{defi}[thm]{Definition}

}
\newenvironment{customthm}[1]
  {\innercustomthm}
  {\endinnercustomthm}
  
{\theoremstyle{remark}
\newtheorem{rk}[thm]{Remark}

}

\newcommand{\fri}{{\mathrm{i}}}
\newcommand{\frj}{{\mathrm{j}}}
\newcommand{\frk}{{\mathrm{k}}}

\title{On the geometry of zero sets of central quaternionic polynomials II}
\author{Gil Alon, Adam Chapman and Elad Paran}

\begin{document}
\maketitle
\begin{abstract}
Following the work of the first and last authors \cite{AP24}, we further analyze the structure of a zero set of a left ideal in the ring of central polynomials over the quaternion algebra $\H$. We describe the ``algebraic hull" of a point in $\H^n$ and prove it is a product of spheres. Using this description we give a new proof to a conjecture of Gori, Sarfatti and Vlacci. We also show that the main result of \cite{AP24} does not extend to general division algebras.
\end{abstract}

\section{Introduction}

Let $R=\H[x_1,\dotsc,x_n]$ be the ring of polynomials in $n$ commuting variables over the Hamilton's ring of real quaternions, and consider the space of ``central" points
$$\H^n_c=\{ (q_1,\dotsc,q_n)\in \H^n \, | \, q_i q_j=q_jq_i \text{ for all } 1\leq i,j \leq n \}$$
For a polynomial $f=\sum_{i_1,\dotsc,i_n \geq 0} a_{i_1,\dotsc,i_n} {x_1}^{i_1}\dotsc {x_n}^{i_n}$ and any point $a=(a_1,\dotsc,a_n)\in \H^n$ let us define the substitution $f(a)$ by
$$f(a)=\sum_{i_1,\dotsc,i_n \geq 0} a_{i_1,\dotsc,i_n} {a_1}^{i_1}\dotsc {a_n}^{i_n}$$
For a subset $I\subseteq R$, let $\V(I)=\{a\in \H^n \, | \, f(a)=0 \, \text{ for all } f\in I \}$ and $\V_c(I)=\V(I) \cap \H^n_c$. For a subset $X\subseteq \H^n$, let $\I(X)=\{f\in R \, | \, f(a)=0 \, \text{for all } a\in X \}$.

In an earlier work of the first and last authors (\cite{AP24}), and also in an independent work of Gori, Sarfatti and Vlacci (\cite{GSV24}), the following theorem was proved:

\begin{thm}\label{thm:central_zeros}
Let $I$ be a left ideal of $R=\H[x_1,\dotsc,x_n]$. If a polynomial $f\in R$ vanishes on $\V_c(I)$, then $f$ vanishes on $\V(I)$.
\end{thm}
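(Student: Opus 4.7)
The plan follows the algebraic hull strategy indicated by the abstract. For $a=(q_1,\dotsc,q_n)\in\H^n$, I would define the \emph{algebraic hull} $H(a)\subseteq\H^n$ as the zero locus in $\H^n$ of the ideal of real polynomials in $\R[x_1,\dotsc,x_n]$ that vanish at $a$. Since each $q_i$ has a monic real minimal polynomial $\mu_{q_i}(x)$ of degree at most $2$, one identifies $H(a)=\prod_{i=1}^n\Sigma_{q_i}$, where $\Sigma_{q_i}=\{r\in\H:\mu_{q_i}(r)=0\}$ is the conjugacy class of $q_i$ in $\H$ --- a $2$-sphere if $q_i\notin\R$, a single real point otherwise. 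The inclusion $H(a)\subseteq\prod_{i=1}^n\Sigma_{q_i}$ uses the generators $\mu_{q_i}(x_i)$; the reverse inclusion is a dimension-count comparing $\R[x_1,\dotsc,x_n]/(\mu_{q_1}(x_1),\dotsc,\mu_{q_n}(x_n))$ with the real span of products $q_1^{k_1}\cdots q_n^{k_n}$ inside $\H$.

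Given this structural description, I would attempt a two-step proof of Theorem \ref{thm:central_zeros}. The first step is a \emph{density principle}: any polynomial $f\in R$ vanishing on the central tuples of a product of spheres $\prod_{i=1}^n\Sigma_{q_i}$ also vanishes on the entire product. The central tuples of such a product are precisely those lying in a common commutative subfield $\R[v]\cong\C$ of $\H$ as $v$ ranges over unit pure quaternions; on each such slice, $f$ restricts to an ordinary complex polynomial in commuting variables, and the classical representation formula for slice-regular quaternionic polynomials shows that vanishing on every slice forces vanishing on the full product of spheres.

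The second step is to bridge from $\V_c(I)$ to the central points of $H(a)$ for $a\in\V(I)$. The naive bridge --- asserting $H(a)\cap\H^n_c\subseteq\V_c(I)$ --- is false in general (e.g.\ for $I=R(x_1-q_1)$ with $q_1\notin\R$, other sphere-conjugates of $q_1$ are not forced into $\V_c(I)$). Instead I would proceed \emph{relatively}: restrict $f$ to the fiber of $\V(I)$ through $a$, use the decomposition $\H=\R[v]\oplus\R[v]\,\tilde v$ for a suitable anti-commuting pair $v,\tilde v$ adapted to $a$, and recognize the restriction as a slice-polynomial on the fiber; vanishing on the central slice (which is contained in $\V_c(I)$) then forces the slice-polynomial to vanish on the whole fiber, giving $f(a)=0$.

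The main obstacle is organizing these two steps into a coherent induction on $n$. The base case $n=1$ is trivial since $\H_c^1=\H^1$. For the inductive step, one would fix all but one coordinate of $a$ at carefully chosen central values inside $\V_c(I)$ and reduce to a univariate slice-regularity computation in $\H[x_i]$, whose zero-set structure (a finite union of points and spheres) is classical. The critical technical point is guaranteeing a sufficient supply of such central configurations inside $\V_c(I)$; this is where the ``algebraic hull is a product of spheres'' result is used crucially, providing a guaranteed family of central points linked to $a$ along which one can perform the slice-regular continuation. Making this relative hull construction precise, and verifying that it supplies enough test points for the inductive step, is the part of the argument I expect to be the most delicate.
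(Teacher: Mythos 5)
Your density-principle step (a polynomial whose restriction to a product of spheres vanishes on all central tuples must vanish on the whole product) is sound and is morally the paper's Lemmas \ref{lem:affine_on_sphere} and \ref{lem:vanish_on_sphere}: the restriction to the sphere parameters is multi-affine, and enough central test points force it to be zero. But the proposal has a genuine gap in its structural half, and your own writeup flags it without resolving it.

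The gap is in the notion of ``algebraic hull.'' You take $H(a)$ to be the zero set of the \emph{real} polynomials vanishing at $a$, getting $H(a)=\prod_{i=1}^n\Sigma_{q_i}$, a product of $n$ factors indexed by \emph{individual} coordinates. That set is in general \emph{not} contained in $\V(I)$, so it cannot play the role of a hull: with $v=(\fri,\fri)$ and $I=R(x_1-x_2)$, your $H(v)=\S\times\S$ contains $(\fri,\frj)\notin\V(I)$. You notice the resulting failure of the bridge $H(a)\cap\H^n_c\subseteq\V_c(I)$, but the fix is the key mathematical content and it is missing. The paper's hull $\mathcal H(v)$ (defined as the intersection of all $\V(I)$ with $v\in\V(I)$) equals the \emph{blow-up} $B(v)=\{v_0\}\times\S_{v_1}\times\dotsc\times\S_{v_r}$ taken with respect to the \emph{central presentation} $v=(v_0,\dotsc,v_r)$: the coordinates are grouped into maximal commuting blocks, and one sphere is attached per block, not per coordinate. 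This is strictly smaller than your $H(a)$. The nontrivial fact that $B(v)\subseteq\V(I)$ (Theorem \ref{thm:blowup}) is the engine of the whole argument, and it rests on the conjugation lemma (\cite[Lemma 4.1]{AP24}): if $v\in\V(I)$ and $q_i\neq 0$, conjugating the later coordinates by $q_i$ again lands in $\V(I)$. You have no replacement for this step. Once $B(v)\subseteq\V(I)$ is known, the $2^r$ points $(v_0,A_1\pm B_1 s_0,\dotsc,A_r\pm B_r s_0)$ all lie in $B(v)$ and in a common slice $(L_{s_0})^n\subseteq\H^n_c$, hence in $\V_c(I)$; your density principle (in the form of Lemma \ref{lem:vanish_on_sphere}) then finishes the proof. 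Your ``relative'' workaround --- restricting $f$ to a ``fiber of $\V(I)$ through $a$'' and running a slice-regular continuation --- is not made precise and does not obviously supply the needed central points; $\V(I)$ is not a variety, and there is no canonical fibration to restrict to. In short: you have the right analytic lemma and the right slogan (``hull is a product of spheres''), but the wrong hull, and the missing piece is exactly the central-presentation blow-up together with the conjugation lemma that forces it into $\V(I)$.
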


This theorem was used to prove the following Nullstellensatz, conjectured in \cite{GSV} by Gori, Sarfatti and Vlacci:

\begin{cor} \label{cor:quat_null}
Let $I$ be a left ideal of $R=\H[x_1,\dotsc,x_n]$. Then $\I(\V(I))=\sqrt{I}$.
\end{cor}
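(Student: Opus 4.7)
The plan is to deduce the corollary from Theorem~\ref{thm:central_zeros} together with the classical Hilbert Nullstellensatz over $\C$. Theorem~\ref{thm:central_zeros} combined with the trivial inclusion $\V_c(I)\subseteq\V(I)$ gives $\I(\V(I))=\I(\V_c(I))$, so it suffices to show $\I(\V_c(I))=\sqrt{I}$.

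The next step is to slice the central locus: $\H^n_c=\bigcup_{\iota\in\S^2}\C_\iota^n$ where $\C_\iota:=\R+\R\iota\cong\C$. Fixing $\iota$ and $\jmath\in\S^2$ with $\iota\jmath=-\jmath\iota$ yields $\H=\C_\iota\oplus\jmath\C_\iota$, and every $g\in R$ decomposes uniquely as $g=g^{(1)}+\jmath g^{(2)}$ with $g^{(i)}$ in the commutative ring $R_\iota:=\C_\iota[x_1,\dt,x_n]$. For $a\in\C_\iota^n$ one checks $g(a)=g^{(1)}(a)+\jmath g^{(2)}(a)$, so $g(a)=0$ iff both components vanish. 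Letting $J_\iota\subseteq R_\iota$ be the classical ideal generated by $\{g^{(1)},g^{(2)}:g\in I\}$, one obtains $\V_c(I)\cap\C_\iota^n=V_{R_\iota}(J_\iota)$, and the Hilbert Nullstellensatz over $R_\iota\cong\C[x_1,\dt,x_n]$ gives $\I_{R_\iota}(V(J_\iota))=\sqrt{J_\iota}$.

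To bridge back to $R$, I would use the central reduced norm $N(f):=f\bar{f}\in\R[x_1,\dt,x_n]$, which in the $\iota$-decomposition equals $|f^{(1)}|^2+|f^{(2)}|^2$ and is multiplicative. For $f\in\I(\V_c(I))$ both $f^{(1)},f^{(2)}$ vanish on $V(J_\iota)$, so $N(f)$ does too, hence $N(f)\in\sqrt{J_\iota}$ and $N(f)^k\in J_\iota$ for some $k$. Crucially, for $g\in I$ one has $N(g)=\bar{g}\cdot g\in R\cdot I\subseteq I$, so $N$ takes $I$ into $I\cap\R[x_1,\dt,x_n]$; combined with Noetherianity of $\R[x_1,\dt,x_n]$ and an analysis of how $J_\iota\cap\R[x_1,\dt,x_n]$ relates to $\{N(g):g\in I\}$, one should extract $N(f)^k\in I$ for some uniform $k$, and by multiplicativity of $N$ conclude $f\in\sqrt{I}$.

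The main obstacle is this last step: pinning down the precise relationship between $J_\iota\cap\R[x_1,\dt,x_n]$ and the real part of $I$, and choosing a noncommutative radical $\sqrt{I}$ that is both a left ideal of $R$ and satisfies the desired Nullstellensatz. The reverse containment $\sqrt{I}\subseteq\I(\V(I))$ should be routine, using the absence of nonzero nilpotents in $\H$ together with Theorem~\ref{thm:central_zeros}. An alternative route to the whole corollary uses the Azumaya-algebra isomorphism $R\otimes_\R\C\cong M_2(\C[x_1,\dt,x_n])$ together with Morita equivalence to reduce the entire problem to the classical commutative Nullstellensatz.
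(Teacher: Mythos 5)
Your opening reduction is the same one the paper relies on: Theorem~\ref{thm:central_zeros} plus $\V_c(I)\subseteq\V(I)$ gives $\I(\V(I))=\I(\V_c(I))$, and the paper then simply invokes the central quaternionic Nullstellensatz of \cite{AP21}, which says $\I(\V_c(I))=\sqrt I$ with $\sqrt I$ the intersection of the completely prime left ideals containing $I$. Everything after your first paragraph is an attempt to reprove that cited ingredient, and the two places where your sketch trails off are exactly its mathematical substance, not routine bookkeeping. First, the slice ideal $J_\iota$ is generated by the components $g^{(1)},g^{(2)}$ of elements $g\in I$, and these components are in general \emph{not} in $I$: extracting them requires right multiplication, e.g.\ $g^{(1)}=\tfrac12\left(g-\iota g\iota\right)$, which a left ideal does not permit. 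Consequently $J_\iota$ can be far larger than anything controlled by $I$ (for $n=1$, $I=R(x-\frj)$ and $\iota=\fri$ one gets $J_\iota=R_\iota$), so there is no visible path from $N(f)^k\in J_\iota$ --- even simultaneously for all $\iota$ --- back to $N(f)^k\in I$; this is the heart of the matter and it is left unresolved.

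Second, even granting $N(f)^k\in I$, the step ``by multiplicativity of $N$ conclude $f\in\sqrt I$'' does not engage the actual definition of $\sqrt I$. For a completely prime left ideal $P\supseteq I$ in Reyes' sense, $\bar f f\in P$ lets you invoke complete primeness only if you also control something like $\bar f P\subseteq P$, which you have not established; an argument of precisely this kind is what \cite{AP21} supplies. Relatedly, your reverse inclusion $\sqrt I\subseteq\I(\V(I))$ is not about the absence of nilpotents in $\H$: the standard route is to show that for each $v\in\V_c(I)$ the left ideal $\I(\{v\})$ is completely prime and contains $I$ (again a nontrivial fact, since evaluation at a point --- even a central one --- is not a ring homomorphism on $R$), and then apply Theorem~\ref{thm:central_zeros}. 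Finally, the proposed shortcut via $R\otimes_\R\C\cong M_2(\C[x_1,\dt,x_n])$ and Morita equivalence does not obviously work either, because the zero set of a left ideal under evaluation at non-central points and the one-sided radical are not notions that transport through Morita equivalence in any straightforward way. So the proposal, as it stands, has a genuine gap at the point where the paper leans on \cite{AP21}.
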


Here $\sqrt I$ is the intersection of {\it completely prime} left ideals containing $I$ (see \cite{AP21} and \cite{Reyes} for details).

The proof of Theorem \ref{thm:central_zeros} in \cite{AP24} was based on a geometric argument (which generalizes the argument for the case $n=2$ in \cite{GSV}): Given a point $v=(q_1,\dotsc,q_n)\in \V(I)$ which is not in $\H^n_c$, one constructs a different point $v' \in \V(I)$, obtained from $v$ by conjugating the coordinates $(q_{i+1},\dotsc,q_n)$ by a nonzero coordinate $q_i$ (where $1\leq i \leq n-1)$). One then finds a sphere $S \subseteq \H^n$ containing $v$ and $v'$, and proves that $S\subseteq \V(I)$. This process is then repeated by starting from points in $S$ and constructing from them new points of $\V(I)$, until reaching points $v_1, \dotsc, v_k$ in $\V_c(I)$. Under the assumption that $f$ vanishes on $\V_c(I)$, one concludes that $f(v_1)=\dotsc=f(v_k)=0$. These equalities and the specific construction of the points $v_1,\dotsc,v_k$ guarantee that $f(v)=0$.

The above proof scheme raises a natural question: Given a left ideal $I \subseteq R$ and a point $v\in \V(I)$, what is the natural set of points in $\H^n$ guaranteed to be in $\V(I)$ as well? More formally, we are looking for the following set, which we call the \emph{algebraic hull} of $v$:
\begin{align} \label{hull} \mathcal H(v)=\bigcap_{I \subseteq R \colon v\in \V(I)} \V(I),
\end{align}

where the intersection is over all left ideals $I$ in $R$ with $v \in \V(I)$.

To answer this question, let us consider a minimal decomposition of $v$ to central sub-vectors: $v=(v_0,v_1,\dotsc,v_r)$ where $v_i\in \H^{d_i}_c$ and $\sum d_i=n$. We consider the set 

\begin{align} \label{eq:B(v)}
B(v)=\{v_0\} \times \S_{v_1} \times \dotsc \times \S_{v_r}
\end{align}
where $\S_w=\{qwq^{-1} \, | \, q\in \H^* \}$ is the sphere defined by $w$. We shall call this set the \emph{enveloping multisphere} of $v$. 
\footnote{Note that the minimal decomposition $v=(v_0,\dt,v_r)$ to central sub-vectors is not unique, as some of the coordinates may be in $\R$, but changing the subdivision of $v$ by moving a real coordinate from one sub-vector to another does not change the product (\ref{eq:B(v)}).}
 We will prove that $\mathcal H(v)=B(v)$. The proof has two parts: First, we will show the inclusion $B(v)\subseteq \mathcal H(v)$ by proving the following theorem:

\begin{thm} \label{thm:envelope}
Let $I$ be a left ideal in $R=\H[x_1,\dotsc,x_n]$. If $v\in \V(I)$ then $B(v) \subseteq \V(I)$.
\end{thm}

Then, we will prove the inclusion $\mathcal H(v)\subseteq B(v)$ by showing that $B(v)$ itself is algebraic, i.e: $B(v)=I(J)$ for some left ideal $J$ in $R$.

Furthermore, we will investigate the sets of the form $$\S(v_0;v_1,\dotsc,v_r)=\{v_0\} \times \S_{v_1} \times \dotsc \times \S_{v_r}$$ for any vectors $v_0 \in \H^{k_0}$ and $v_i \in \H^{k_i}_c \setminus \R^{k_i}$. We shall call such sets \emph{multispheres}. We shall prove the following properties, which may be of independent interest:

\begin{enumerate}
    \item The restriction of a slice regular polynomial $f\in R$ to a multisphere is a multi-affine function (Lemma \ref{lem:affine_on_sphere}).
    \item Given a multisphere $\S(v_0;v_1,\dotsc,v_r)$ and two-point subsets $W_1 \subset \S_{v_1},\dotsc,$ $W_r \subset \S_{v_r}$, if a polynomial $f\in R$ vanishes on $ \{v_0\} \times W_1 \times \dotsc \times W_r $, then $f$ vanishes on the entire multisphere (Lemma \ref{lem:vanish_on_sphere}).
\end{enumerate}

Using these properties and Theorem \ref{thm:envelope}, we will present a new proof of Theorem \ref{thm:central_zeros}. 

Finally, we will answer the question presented at the end of \cite{AP24}, whether Theorem \ref{thm:central_zeros} holds if one replaces the quaternion algebra $\H$ with an arbitrary division ring. Our answer is negative. We will construct a quaternion algebra over a certain field for which the claim of the theorem does not hold. Our proof will use techniques from the theory of quadratic forms over fields.

The rest of this paper is organized as follows: In section \ref{sec:prelim} we quickly recall a few basic facts about the quaternion algebra $\H$, and introduce some convenient notation. In section \ref{sec:spheres} we introduce the notion of a multisphere (which generalizes the notion of an embedded sphere from \cite{AP24}), and prove some useful properties. We also define the enveloping multisphere of a point. In section \ref{sec:hull} we show that indeed the algebraic hull of a point is equal to its enveloping multisphere. In section \ref{sec:proof} we give a new proof to Theorem \ref{thm:central_zeros}, based on these new concepts. Finally, in section \ref{sec:counterexample} we show that Theorem \ref{thm:central_zeros} does not hold over arbitrary division rings.

\section{Preliminaries} \label{sec:prelim}

Let us recall some definitions and results from \cite{AP24}. We let $\H=\R + \R \fri + \R \frj + \R \frk$ be Hamilton's algebra of quaternions,\footnote{We denote by $\fri, \frj,\frk$ the standard generators of $\H$, as opposed to the letters $i,j,k$ which we reserve for indices.} and set $\S = \{q \in \H \, | \, q^2=-1 \}$. For any $s\in \S$, let $L_s=\R + \R s$ (the maximal subfield of $\H$ determined by $s$). 
For any $q\in \H^*$ and $v=(q_1,\dotsc,q_n)\in \H^n$, let us denote $v^q=(qq_1q^{-1},\dotsc,q q_n q^{-1}).$ Let us also denote for such $v$, 
$\S_v=v^{\H^*}$.

We recall from \cite{AP24} the following properties of $\H$ and $\S$:

\begin{lemma}\label{lem:quaternion_facts} We have:
\begin{enumerate}
\item
$\S=\{a \fri+b \frj+c \frk\, | \,a,b,c\in \R, a^2+b^2+c^2=1\}$
\item 
Any $q\in \mathbb H$ has a representation as $q=a+bs$ where $a,b\in \mathbb {R}$ and $s\in \S$. 
\item
Any two elements of $\S$ are conjugate in the multiplicative group $\H^*$.
\item
We have $\H^n_c=\bigcup_{s\in \S}(L_s)^n$.
\item
For any $v\in \H^n_c$, $v$ can be expressed as $v=A+Bs$ where $A,B\in \R^n$ and $s\in \S$, and we have $\S_v=\{ A+ Bs \st s \in \S \} = A+B \S$.
\end{enumerate}
\end{lemma}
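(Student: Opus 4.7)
The plan is to verify the five parts in order, reducing each to the previous ones and to the geometric picture of $\S$ as the unit sphere in the pure imaginary quaternions.

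For (1), writing a general quaternion as $q = t + a\fri + b\frj + c\frk$ and expanding $q^2 = t^2 - (a^2+b^2+c^2) + 2t(a\fri + b\frj + c\frk)$, the equation $q^2 = -1$ splits into real and pure imaginary parts; these force $t = 0$ and then $a^2+b^2+c^2 = 1$. Item (2) is then immediate: decompose $q = a + p$ with $p$ pure imaginary; if $p = 0$ take any $s \in \S$ and write $q = a + 0 \cdot s$, otherwise $s := p/\|p\|$ lies in $\S$ by (1) and $q = a + \|p\|\, s$.

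For (3), given $s, t \in \S$ I would try the candidate $q = 1 - ts$. Using $s^2 = t^2 = -1$ one computes $tq = t + s = qs$, so $t = qsq^{-1}$ whenever $q \neq 0$; and $q = 0$ exactly when $t = s^{-1} = -s$. In the remaining case $t = -s$, extend $s$ to an orthonormal basis of the pure imaginaries and take $r$ to be any basis vector perpendicular to $s$; then $r \in \S$, and from the identity $pq + qp = -2\langle p, q\rangle$ on pure imaginaries one obtains $rs = -sr$, hence $rsr^{-1} = -s = t$.

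For (4), the inclusion $\bigcup_{s \in \S} L_s^n \subseteq \H^n_c$ is trivial. Conversely, given a pairwise commuting tuple $v = (q_1,\ldots,q_n)$, I dispose of the case $v \in \R^n$ and otherwise pick some $q_j \notin \R$, written by (2) as $q_j = a_j + b_j s_j$ with $b_j \neq 0$. The key claim is that any $q$ commuting with $q_j$ lies in $L_{s_j}$: writing $q = a + bs'$ with $s' \in \S$, the hypothesis reduces to $b[s', s_j] = 0$, and since two pure imaginary elements of $\S$ commute iff they are parallel (from $pq - qp = 2\, p \times q$), either $b = 0$ or $s' = \pm s_j$; either way $q \in L_{s_j}$. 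Applying this to each $q_i$ places all of $v$ in $L_{s_j}^n$. Finally (5) combines (4) and (3): by (4) write $v = A + Bs$ with $A, B \in \R^n$ and $s \in \S$; then $v^q = A + B(qsq^{-1})$ because real scalars are central, and by (3) the element $qsq^{-1}$ ranges over all of $\S$ as $q$ varies in $\H^*$.

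The only step that is not a one-line verification is the commutation analysis in (4); the whole argument hinges on the $\R^3$-geometric fact that two unit pure imaginary quaternions commute exactly when they are collinear, which is what allows all commuting coordinates to be collected into a single maximal subfield $L_{s_j}$.
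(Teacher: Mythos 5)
Your proof is correct and self-contained. The paper itself does not prove this lemma: it simply cites \cite[Lemma 2.3 and equations (1),(2)]{AP24}, so there is no in-paper argument to compare against. Your verifications of (1), (2), (3), and (5) are the standard ones. For (3), the explicit conjugator $q = 1 - ts$ (with the separate handling of the antipodal case $t = -s$) is a clean choice, and the identity $rs = -sr$ for orthogonal unit pure imaginaries is used correctly to finish. For (4), the reduction to the commutation criterion $bb_j[s', s_j] = 0$ is right; you suppress the factor $b_j$ when stating the reduced condition, which is harmless since you chose $q_j \notin \R$ precisely so that $b_j \neq 0$, but it is worth writing the full product $bb_j[s', s_j]$ before discarding $b_j$. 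The only other point worth flagging is in (5): after writing $v = A + Bs$, one should note both directions of the identity $\S_v = A + B\S$: every conjugate $v^q = A + B(qsq^{-1})$ lies in $A + B\S$ because $qsq^{-1} \in \S$, and conversely every element of $A + B\S$ is attained because by (3) the conjugation action of $\H^*$ on $\S$ is transitive. You state both facts but only the surjectivity direction explicitly; the inclusion $\S_v \subseteq A + B\S$ needs the (one-line) observation that conjugation preserves $\S$. With that spelled out, the argument is complete.
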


\begin{proof}
    See \cite[Lemma 2.3 and equations (1),(2)]{AP24}.
\end{proof}

\section{Multispheres} \label{sec:spheres}
\begin{defi}\label{def:multisphere}
Let $n=\sum_{i=0}^rk_i$ where $k_0\geq 0$ and $k_i\geq 1$ for $i\geq 1$. Let $v_0\in \H^{k_0}$ and $v_i \in \H^{k_i}_c \setminus \R^{k_i}$ for $1\leq i \leq r$. The set $$ \S(v_0; v_1,...,v_r)=\{v_0\} \times \prod_{i=1}^r \S_{v_i} \subseteq \H^n$$ is called an $r$-multisphere in $\H^n$.
\end{defi}

We note that this notion generalizes the notion of an embedded sphere from \cite[Definition 3.2]{AP24}: An embedded sphere is a $1$-multisphere.

By part 5 of Lemma \ref{lem:quaternion_facts}, if we let for each $1\leq i\leq r$, $v_i=A_i+B_i t_i$ with $A_i\in \R^{k_i}, B_i \in \R^{k_i}\setminus \{0\}$, $t_i \in \S$, then 
\begin{align} \label{eq:multisphere_coords}
\S(v_0; v_1,...,v_r)=\{(v_0, A_1+B_1 s_1,\dotsc, A_r+B_r s_r) \st s_1,\dotsc, s_r \in \S \}
\end{align}

Therefore, by part 1 of Lemma \ref{lem:quaternion_facts}, an $r$-multisphere is topologically equivalent to a product of $r$ $2$-spheres.

\begin{defi}
A polynomial $p\in \H[x_1,\dotsc,x_k]$ is called multi-affine if in each of its monomials, all the variables have a degree of $0$ or $1$.
\end{defi}

The following lemma shows that on multispheres, slice regular polynomials are multi-affine functions.

\begin{lemma}\label{lem:affine_on_sphere}
Let $p\in \H[x_1,\dotsc, x_n]$ and suppose that $n=\sum_{i=0}^rk_i$ where $k_0\geq 0$ and $k_i\geq 1$ for $i\geq 1$. Let $v_0\in \H^{k_0}$ and $A_i, B_i\in \R^{k_i}$ for $1 \leq i \leq r$. Then there exists a multi-affine polynomial $q\in \H[y_1,\dotsc, y_r]$ such that for all $s_1,\dotsc,s_r\in \S$ we have 
\begin{align} \label{eq:multiaffine_rep_general} p(v_0, A_1+B_1 s_1,\dotsc, A_r+B_r s_r)=q(s_1,\dotsc,s_r)
\end{align}
   
\end{lemma}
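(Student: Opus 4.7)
My plan is to reduce to a single monomial by linearity, and then exploit two commutativity facts: within each group of variables, substituted values lie in the commutative subfield $L_{s_i}$, and the real scalars that emerge from the reduction are central in $\H$.

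First, I would observe that (\ref{eq:multiaffine_rep_general}) is additive in $p$: if it holds for $p_1$ and $p_2$ with multi-affine polynomials $q_1$ and $q_2$, then it holds for $p_1 + p_2$ with $q_1 + q_2$. Hence it suffices to construct $q$ when $p = a \cdot x_1^{e_1} \cdots x_n^{e_n}$ is a single monomial with $a \in \H$. I would partition the indices $\{1,\dotsc,n\}$ into consecutive blocks of sizes $k_0, k_1, \dotsc, k_r$, so that the monomial factors (in the order used for substitution) as $a \cdot M_0 \cdot M_1 \cdots M_r$, where $M_i$ is a monomial involving only the $i$-th block of variables.

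After substitution, $a \cdot M_0(v_0)$ is a single quaternion, call it $\alpha_0$. For each $i \geq 1$, the factor $M_i(A_i + B_i s_i)$ becomes a product of powers of elements of $L_{s_i}$; since $L_{s_i}$ is a commutative subfield of $\H$ (Lemma \ref{lem:quaternion_facts}), this product lies in $L_{s_i}$ and therefore equals $c_i + d_i s_i$ for some $c_i, d_i \in \R$ that depend only on $M_i, A_i, B_i$ and not on $s_i$. Thus the substituted monomial equals $\alpha_0 (c_1 + d_1 s_1)(c_2 + d_2 s_2) \cdots (c_r + d_r s_r)$. I would then set $q(y_1, \dotsc, y_r) := \alpha_0 (c_1 + d_1 y_1)(c_2 + d_2 y_2) \cdots (c_r + d_r y_r) \in \H[y_1, \dotsc, y_r]$. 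Expanding this product and using that each $c_i, d_i$ is real (hence central), each term of the expansion has the form $(\text{quaternion}) \cdot \prod_{i \in S} y_i$ for some $S \subseteq \{1, \dotsc, r\}$, so $q$ is multi-affine; and evaluating at $y_i = s_i$ recovers the substituted monomial by construction. Summing these multi-affine polynomials over the monomials of a general $p$ yields the required polynomial.

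The only place where any care is required is the order of multiplication across distinct blocks: since $s_i$ and $s_j$ need not commute for $i \neq j$, the factors $(c_i + d_i y_i)$ must appear in the same block order as in the original substituted expression. This is automatic once one respects the left-to-right substitution convention of $f(a)$, so there is no real obstacle beyond careful bookkeeping of the two key commutativity facts (each $L_{s_i}$ is a field, and $\R$ is central in $\H$).
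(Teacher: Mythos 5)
Your proposal is correct and follows essentially the same route as the paper: reduce to a single monomial, convert each block $M_i(A_i+B_i s_i)$ into $c_i+d_i s_i$ with $c_i,d_i\in\R$, and use that these coefficients are real (hence central) to assemble a multi-affine $q$, keeping the block order. The one point the paper makes explicit and you only assert is the independence of $c_i,d_i$ from $s_i$: commutativity of $L_{s_i}$ alone gives membership in $L_{s_i}$, while the independence comes from the uniform multiplication rule $(a+bs)(c+ds)=(ac-bd)+(ad+bc)s$ (only $s^2=-1$ is used), which the paper verifies by induction on the degree of the block monomial.
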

\begin{proof}
Let us relabel the variables $x_1,\dotsc,x_n$ according to the partition $n=\sum_{i=0}^rk_i$:
\begin{align} \label{eq:rename_variables}
(x_1,\dotsc,x_n)=(x_{0,1},\dotsc,x_{0,k_0},\, x_{1,1},\dotsc,x_{1,k_1}, \, \dotsc,\,x_{r,1},\dotsc,x_{r,k_r}) 
\end{align}
\begin{enumerate}[\emph{Step }1.]
\item
For each $1\leq j\leq r$, consider a monomial 
\begin{align} \label{eq:monomial} 
m_j(x_{j,1},\dotsc,x_{j,k_j})=x_{j,1}^{i_1}\dotsc x_{j,k_j}^{i_{k_j}}
\end{align}
We claim that there exist $c_j,d_j \in \R$  such that for all $s \in \S$, 

$$m_j(A_j+B_j s)=c_j + d_j s$$
This claim follows by induction on $\deg m_j$, and noting that for $a,b,c,d\in \R$ and $s\in \S$, $$(a+bs)(c+ds)=(ac-bd)+(ad+bc)s$$

\item Let us consider a monomial $m=m_1 m_2 \dotsc m_r$ where each $m_i$ is as in (\ref{eq:monomial}). Then for all $s_1,\dotsc,s_r\in S$ we have $$m(A_1+B_1s_1,\dotsc,A_r+B_rs_r)=(c_1+d_1s_1)(c_2+d_2s_2)\dotsc(c_r+d_rs_r)$$
Hence, there exists a multi-affine polynomial $q \in \R[y_1,\dotsc,y_r]$ such that 
\begin{align} \label{eq:multiaffine_rep_monomial}
m(A_1+B_1s_1,\dotsc,A_r+B_rs_r)=q(s_1,\dotsc,s_r)
\end{align}
\item Let $p\in \H[x_1,\dotsc,x_n]$ and let us write $p=\sum p_i m_i$ where 

$p_i \in \H[x_{0,1},\dotsc,x_{0,k_0}]$ and $m_i$ are monomials as in (\ref{eq:monomial}). By the previous step, there exist multi-affine polynomials $q_i$ which satisfy (\ref{eq:multiaffine_rep_monomial}) with respect to $m_i$. Let us define $q=\sum_i p_i(v_0) q_i$. Then $q\in \H[y_1,\dotsc,y_r]$ is a multi-affine polynomial and
\begin{align*}
    p(v_0, A_1+B_1 s_1,\dotsc, A_r+B_r s_r) &=\sum p_i(v_0) m_i(A_1+B_1 s_1,\dotsc, A_r+B_r s_r) \\
    &= \sum p_i(v_0) q_i(s_1,\dotsc,s_r)  \\
    &= q(s_1,\dotsc,s_r) \qedhere
\end{align*}

\end{enumerate}
\end{proof}

From Lemma \ref{lem:affine_on_sphere} we conclude:
\begin{lemma} \label{lem:vanish_on_sphere} 
Let $S=\S(v_0; v_1,\dots,v_r)$ be an $r$-multisphere. For each $1\leq i \leq r$, let $w_{i,1}$ and $w_{i,2}$ be two distinct points in $\S_{v_i}$. Consider the set 
$$ Q=\{v_0\} \times \{w_{1,1},w_{1,2}\} \times \dotsc \times \{w_{r,1},w_{r,2}\} \subseteq S $$

If a slice regular polynomial $p\in \H[x_1,\dotsc,x_n]$ vanishes on $Q$, then $p$ vanishes on the entire multisphere $S$.

\end{lemma}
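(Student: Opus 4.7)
The plan is to reduce the lemma to a statement about multi-affine polynomials over $\H$ and then prove that statement by induction on $r$.

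First I would use part 5 of Lemma \ref{lem:quaternion_facts} to write each $v_i = A_i + B_i t_i$ with $A_i \in \R^{k_i}$, $B_i \in \R^{k_i} \setminus \{0\}$ and $t_i \in \S$, so that $\S_{v_i} = \{A_i + B_i s \st s \in \S\}$. For each $i$, pick $s_{i,1}, s_{i,2} \in \S$ with $w_{i,j} = A_i + B_i s_{i,j}$; since $B_i \ne 0$, the map $s \mapsto A_i + B_i s$ is injective, so $w_{i,1} \ne w_{i,2}$ forces $s_{i,1} \ne s_{i,2}$.

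Next I would apply Lemma \ref{lem:affine_on_sphere} to obtain a multi-affine polynomial $q \in \H[y_1, \dotsc, y_r]$ satisfying $p(v_0, A_1 + B_1 s_1, \dotsc, A_r + B_r s_r) = q(s_1, \dotsc, s_r)$ for all $s_1, \dotsc, s_r \in \S$. Under this translation, the hypothesis becomes: $q$ vanishes on the $2^r$-point ``cube'' $\prod_{i=1}^r \{s_{i,1}, s_{i,2}\}$, and the desired conclusion becomes: $q$ vanishes on all of $\S^r$. Thus the lemma reduces to the following assertion: \emph{any multi-affine $q \in \H[y_1, \dotsc, y_r]$ that vanishes on $\prod_{i=1}^r \{s_{i,1}, s_{i,2}\}$ (with $s_{i,1} \ne s_{i,2}$ in $\S$) vanishes on all of $\S^r$.}

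To prove this assertion I would induct on $r$, the case $r=0$ being trivial. For the inductive step, decompose $q = q_0 + q_1 \cdot y_r$ with $q_0, q_1 \in \H[y_1, \dotsc, y_{r-1}]$ multi-affine; since $y_r$ is last in the fixed monomial order, substitution gives $q(s_1, \dotsc, s_r) = q_0(s_1, \dotsc, s_{r-1}) + q_1(s_1, \dotsc, s_{r-1}) \cdot s_r$. Fixing any $(j_1, \dotsc, j_{r-1}) \in \{1,2\}^{r-1}$ and setting $\alpha = q_0(s_{1, j_1}, \dotsc, s_{r-1, j_{r-1}})$ and $\beta = q_1(s_{1, j_1}, \dotsc, s_{r-1, j_{r-1}})$, the hypothesis forces $\alpha + \beta s_{r,1} = 0 = \alpha + \beta s_{r,2}$; subtracting yields $\beta(s_{r,1} - s_{r,2}) = 0$, and since $\H$ is a division ring with $s_{r,1} \ne s_{r,2}$, it follows that $\beta = 0$ and then $\alpha = 0$. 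Hence both $q_0$ and $q_1$ vanish on $\prod_{i=1}^{r-1}\{s_{i,1}, s_{i,2}\}$, and the inductive hypothesis closes the argument.

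The step needing the most care is the non-commutative substitution: although $q$ lives in a polynomial ring in commuting formal variables, evaluation at quaternions depends on a fixed monomial order (the one implicit in the proof of Lemma \ref{lem:affine_on_sphere}). The clean split $q = q_0 + q_1 y_r$, substituted as $q_0(\vec{s}) + q_1(\vec{s}) \cdot s_r$, relies on $y_r$ being last in this order and on $\beta$ appearing on the left of $s_r$ so that the division-ring cancellation is valid. Once this bookkeeping is in place, the rest is routine.
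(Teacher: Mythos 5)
Your proof is correct and follows essentially the same route as the paper's: reduce via Lemma~\ref{lem:affine_on_sphere} to a multi-affine polynomial $q$ vanishing on a $2^r$-point cube, then induct on $r$ by splitting $q = q_0 + q_1 y_r$ and using the division-ring cancellation $\beta(s_{r,1}-s_{r,2})=0 \Rightarrow \beta=0$. The only cosmetic difference is that the paper's inductive claim is the slightly stronger statement that $q$ is the zero \emph{polynomial} (base case $r=1$), whereas you induct on the claim that $q$ vanishes on $\S^r$ (base case $r=0$); both close the argument, though the paper's version saves the small final step of deducing vanishing on $\S^r$ from the vanishing of $q_0,q_1$. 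Your remark about the evaluation convention --- that substitution keeps coefficients on the left and uses the fixed ordering of variables, so that $q_1 y_r$ evaluates to $q_1(\vec s)\,s_r$ --- correctly pins down the one place where non-commutativity could bite, and is consistent with the paper's implicit use of the same convention.
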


\begin{proof} Let $v_i= A_i+B_i t_i$ with $A_i, B_i \in \R^{k_i}$ and $t_i \in \S$. By Lemma \ref{lem:affine_on_sphere} there exists a multi-affine polynomial $q\in \H[y_1,\dotsc,y_r]$ satisfying (\ref{eq:multiaffine_rep_general}) for all $s_1,\dotsc,s_r\in \S$. By Lemma 
\ref{lem:quaternion_facts}, there exists for each $1\leq i \leq r$, $s_{i,1},s_{i,2} \in \S$ such that $w_{i,j}=A_i+B_i s_{i,j}$ for $j=1,2$. Therefore $s_{i,1} \neq s_{i,2}$. By (\ref{eq:multiaffine_rep_general}), $q$ vanishes on the set $$\prod_{i=1}^r \{s_{i,1},s_{i,2}\}$$
We claim that a multi-affine polynomial $q\in \H[y_1,\dotsc,y_r]$ which vanishes on such a product is the zero polynomial. Let us show it by induction. For $r=1$, $q$ is of the form $q(y_1)=ay_1+b$, and by plugging in $y_1=s_{1,1}$ and $y_1=s_{1,2}$ and solving the linear equations, we get $q=0$. For $r>1$, we have $q=q_0+q_1 y_r$ where  $q_0,q_1 \in \H[y_1,\dotsc,y_{r-1}]$ are multi-affine polynomials. Hence, for any 
$a_1,\dotsc,a_r \in \H^r$ we have $q(a_1,\dotsc,a_r)=q_0(a_1,\dotsc,a_{r-1})+q_1(a_1,\dotsc,a_{r-1})a_r$. For any $(a_1,\dotsc,a_{r-1}) \in 
\prod_{i=1}^{r-1} \{s_{i,1},s_{i,2}\}$, let $a=q_1(a_1,\dotsc,a_{r-1})$ and $b=q_0(a_1,\dt,a_{r-1})$.
Then both $s_{r,1}$ and $s_{r,2}$ satisfy the equation $ay+b=0$, so $a=b=0$. By the induction hypothesis we get that $q_0=q_1=0$, so $q=0$, and the induction is complete.

Therefore, $q=0$ and by (\ref{eq:multiaffine_rep_general}), $p$ vanishes on $S$.
\end{proof}

\begin{rk}
Lemmas \ref{lem:affine_on_sphere} and \ref{lem:vanish_on_sphere} generalize Lemmas 3.1 and 3.3 in \cite{AP24}, which in turn generalize some properties of slice regular functions in one variable \cite[Section 1.2 and Lemma 3.1]{GSS}. 
\end{rk}

\begin{defi} \label{def:envelope}
\begin{enumerate}
    \item Given a vector  $v\in  \H^n$ let us write $v=(v_0,\dotsc,v_r)$ where $v_i \in \H^{k_i}_c$, $v_i=(v_{i,1}, \dotsc, v_{i,k_i})$ such that $(v_{i,k_i}, v_{i+1}) \notin \H^{k_{i+1}+1}_c$ for all $0\leq i \leq r-1$ (note that this presentation is unique). We call this presentation the central presentation of $v$.
    \item For any vector $v \in \H^n$, let $v=(v_0,\dotsc,v_r)$ be its central presentation. Denote
     $$B(v)=\S(v_0; v_1,\dotsc, v_r)$$ We call $B(v)$ the {\it enveloping multisphere} of $v$.
\end{enumerate}
\end{defi}

The following properties of the central presentation are easy to see:
\begin{lemma} Let $v\in \H^n$, and let $v=(v_0,\dotsc,v_r)$ be its central presentation.
    \begin{enumerate}    
        \item $r=0$ if and only if $v\in \H^n_c$.
        \item for $1\leq i \leq r$, $v_i \notin \R^{k_i}$.
    \end{enumerate}
\end{lemma}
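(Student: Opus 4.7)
The plan is to treat the two parts separately, both of which follow directly from unpacking the defining condition of the central presentation, namely that $(v_{i,k_i}, v_{i+1}) \notin \H^{k_{i+1}+1}_c$ for all $0 \leq i \leq r-1$, together with the uniqueness assertion made in Definition \ref{def:blowup}.

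For part 1, one direction is immediate: if $r = 0$ then $v = v_0 \in \H^{k_0}_c = \H^n_c$. For the converse, assume $v \in \H^n_c$. Then the single-block presentation $r = 0$, $v_0 = v$ trivially satisfies the requirements of Definition \ref{def:blowup} (the condition on consecutive blocks is vacuous when $r = 0$), hence by uniqueness it is \emph{the} central presentation of $v$, giving $r = 0$.

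For part 2, I would argue by contradiction. Suppose $v_i \in \R^{k_i}$ for some $1 \leq i \leq r$. Apply the defining condition with index $i - 1$ (which is a valid index since $i \geq 1$): we should have
\[
(v_{i-1,k_{i-1}}, v_i) \notin \H^{k_i + 1}_c.
\]
However, every entry of $v_i$ is real, hence commutes with $v_{i-1,k_{i-1}}$ and with each other entry of $v_i$. Thus the $(k_i + 1)$-tuple $(v_{i-1,k_{i-1}}, v_i)$ consists of pairwise commuting quaternions and so lies in $\H^{k_i+1}_c$, a contradiction.

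No real obstacle is expected here; the only subtlety is making sure the index shift for part 2 is valid, which is why the restriction $i \geq 1$ in the statement is crucial (for $i = 0$ one cannot invoke the condition with index $-1$, and indeed $v_0$ may well lie in $\R^{k_0}$).
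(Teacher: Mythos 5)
Your proposal is correct. The paper states this lemma without proof (it is introduced with ``The following properties of the central presentation are easy to see:''), and your argument is exactly the natural unpacking the authors have in mind: part 1 from the vacuity of the condition when $r=0$ plus the uniqueness assertion in Definition~\ref{def:blowup}, and part 2 from the observation that a real block cannot break commutativity, contradicting the requirement $(v_{i-1,k_{i-1}},v_i)\notin\H^{k_i+1}_c$.
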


\section{The algebraic hull} \label{sec:hull}
Recall the definition of the algebraic hull $\mathcal H(v)$ of a point $v\in \H^n$ as the intersection of all the algebraic sets in $\H^n$ containing $v$ (see (\ref{hull}) in the introduction). In this section we will prove that $\mathcal H(v)=B(v)$. We start by recalling the following lemma:
\begin{lemma}[{\cite[Lemma 4.1]{AP24}}] \label{lem:conj_root}
Let $I$ be a left ideal in $\H[x_1,\dotsc,x_n]$ and let $v=(q_1,\dotsc,q_n)\in \V(I)$. Let $1\leq i\leq n-1$ such that $q_i\neq 0$, and define $v_1=(q_1,\dotsc,q_i)$ and $v_2=(q_{i+1},\dotsc,q_n)$. Then $(v_1,v_2^{q_i})\in \V(I)$.    
\end{lemma}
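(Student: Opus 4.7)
The plan is to reduce the statement to a single algebraic identity: for every polynomial $f\in R$ one has
\[
f(v_1,v_2^{q_i}) = (x_i f)(v)\cdot q_i^{-1}.
\]
Once this identity is in hand, the conclusion is immediate: since $I$ is a left ideal of $R$ and $x_i\in R$, we have $x_i f\in I$ whenever $f\in I$, hence $(x_i f)(v)=0$, hence $f(v_1,v_2^{q_i})=0$, hence $(v_1,v_2^{q_i})\in \V(I)$.

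To establish the identity, I would fix $f\in I$, write $f=\sum_{\alpha} a_\alpha x_1^{\alpha_1}\cdots x_n^{\alpha_n}$, and carry out the substitution monomial by monomial. The only nontrivial piece is the behavior of the last $n-i$ factors. Because conjugation by $q_i$ is a ring automorphism of $\H$, we have $(q_iq_jq_i^{-1})^m = q_iq_j^mq_i^{-1}$ for each $j>i$, and when these factors are multiplied in the order prescribed by the substitution, the interior $q_i^{-1}q_i$ pairs telescope to give
\[
\prod_{j=i+1}^n (q_iq_jq_i^{-1})^{\alpha_j} \;=\; q_i\cdot q_{i+1}^{\alpha_{i+1}}\cdots q_n^{\alpha_n}\cdot q_i^{-1}.
\]
Absorbing $q_i$ into the adjacent factor via $q_i^{\alpha_i}\cdot q_i = q_i^{\alpha_i+1}$ and comparing the resulting sum term by term with the definition of $(x_i f)(v)$ yields the desired identity.

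I do not expect a real obstacle here, as the proof is essentially a one-line algebraic manipulation once the key identity is spotted. The only points requiring a moment's care are that the variables $x_1,\dt,x_n$ in $R$ commute among themselves, so that multiplying a monomial on the left by $x_i$ simply raises its $i$-th exponent and $x_i f\in I$ is unambiguous; and that the hypothesis $q_i\neq 0$ plays two roles, namely it makes $v_2^{q_i}$ well-defined and it allows the final right multiplication by $q_i^{-1}$ in the identity.
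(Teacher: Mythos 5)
Your proof is correct, and it is essentially the argument of the cited source: the paper only quotes this lemma from \cite[Lemma 4.1]{AP24}, whose proof rests on exactly the identity $f(v_1,v_2^{q_i})=(x_if)(v)\,q_i^{-1}$ combined with $x_if\in I$ for a left ideal. Nothing to add.
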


\begin{customthm}{\ref{thm:envelope}}
Let $I$ be a left ideal in $R=\H[x_1,\dotsc,x_n]$. If $v\in \V(I)$ then $B(v) \subseteq \V(I)$.
\end{customthm}

\begin{proof}
Let $v=(v_0,\dotsc,v_r)$ be the central representation of $v$. We will prove by a descending induction on $i$ that for all $0\leq i \leq r$, 
\begin{align} \label{eq:inductive_claim}
\{(v_0, \dotsc,v_i)\} \times \S_{v_{i+1}} \times \dotsc \times \S_{v_r} \subseteq  \V(I)
\end{align}
    For  $i=r$ we  have $\{(v_0,\dotsc,v_r)\}=\{v\} \subseteq \V(I)$. Let us  assume that the claim is true for some $i>0$.    Let $w=v_{i-1,k_{i-1}}$. By lemma \ref{lem:conj_root}, 
    $$\{(v_0, \dotsc,v_{i-1})\} \times \left (\{v_i  \} \times \S_{v_{i+1}} \times \dotsc \times \S_{v_r} \right )^w\subseteq  \V(I)$$ 
 
    By Lemma \ref{lem:quaternion_facts} we have for all $j$, ${\S_{v_j}}^w=\S_{v_j}$. Hence
    $$\{(v_0, \dotsc,v_{i-1},{v_i}^w) \} \times \S_{v_{i+1}} \times \dotsc \times \S_{v_r} \subseteq  \V(I)$$ 
    For each $i+1\leq j \leq r$ let $W_j\subseteq \S_{v_j}$ be a set of size 2 (note that $\S_{v_j}$ is not a singleton because $v_j \notin \R^{k_j}$). Let $W_i=\{v_i, {v_i}^w\}$. Then we have 
    $$\{(v_0, \dotsc,v_{i-1})\} \times W_i \times \dotsc \times W_r  \subseteq  \V(I)$$ 
    By Lemma \ref{lem:vanish_on_sphere}, 
    $$\{(v_0, \dotsc,v_{i-1})\} \times \S_{v_i} \times \dotsc \times \S_{v_r} \subseteq  \V(I)$$
    Our induction is therefore complete. Letting $i=0$ in (\ref{eq:inductive_claim}) we get $B(v)\subseteq \V(I)$.
\end{proof}

The above theorem clearly implies that $B(v) \subseteq \mathcal H(v)$. To prove that $\mathcal H(v) \subseteq B(v)$, we shall find a left ideal $J$ in $R$ such that $B(v)=\V(J)$. We start with the following lemma:

\begin{lemma} \label{lem:multiple_of_real}
Let $1\leq n_1\leq n_2 \leq n$ and $p \in \R[x_{n_1},\dotsc, x_{n_2}]$. Let $v
=(q_1,\dotsc,q_n)\in \H^n$ be a point satisfying $(q_{n_1},\dotsc,q_{n_2})\in \H^{n_2-n_1+1}_c$ and $p(v)=0$. Then $v \in \V(Rp) $ where $R=\H[x_1,\dotsc,x_n]$.
\end{lemma}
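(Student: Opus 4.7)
The plan is a direct calculation. The assertion ``$v\in \I(Rp)$'' unpacks, under the conventions of the paper, as the claim that every element of the left ideal $Rp$ vanishes at $v$; so I need to show $(fp)(v)=0$ for every $f\in R$. By $\H$-linearity of the evaluation map $g\mapsto g(v)$, it suffices to treat the case where $f$ is a single monomial $f = a\,x_1^{j_1}\cdots x_n^{j_n}$ with $a\in\H$ and $j_1,\dotsc,j_n\ge 0$.

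Writing $p = \sum_i c_i\, x_{n_1}^{i_{n_1}}\cdots x_{n_2}^{i_{n_2}}$ with $c_i\in\R$, I would expand $fp$ using commutativity of the indeterminates of $R$ and then substitute $v$. This produces a sum whose typical term is
$$a\, c_i\, q_1^{j_1}\cdots q_{n_1-1}^{j_{n_1-1}}\, q_{n_1}^{j_{n_1}+i_{n_1}}\cdots q_{n_2}^{j_{n_2}+i_{n_2}}\, q_{n_2+1}^{j_{n_2+1}}\cdots q_n^{j_n}.$$
The decisive step is to refactor each such term so that the entire quantity $p(v)$ appears as a middle factor. Here both hypotheses of the lemma come in: because $q_{n_1},\dotsc,q_{n_2}$ pairwise commute, I can split $q_{n_k}^{j_{n_k}+i_{n_k}}$ as $q_{n_k}^{j_{n_k}}\cdot q_{n_k}^{i_{n_k}}$ and then reorder to isolate the block $q_{n_1}^{i_{n_1}}\cdots q_{n_2}^{i_{n_2}}$ between the $j$-exponent factors for positions $\le n_2$ and those for positions $>n_2$; because each $c_i$ is real and hence central in $\H$, the scalars slide through all the quaternionic factors and can be pulled into that block. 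Summing over $i$ turns the block into $p(v)$, so
$$(fp)(v) \;=\; a\, q_1^{j_1}\cdots q_{n_2}^{j_{n_2}}\, \cdot\, p(v)\,\cdot\, q_{n_2+1}^{j_{n_2+1}}\cdots q_n^{j_n},$$
which vanishes by the hypothesis $p(v)=0$.

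The argument is essentially careful bookkeeping and I do not anticipate a serious obstacle. The one point that must be kept in mind, and the reason the two hypotheses are required, is that the evaluation map $g\mapsto g(v)$ is not in general a ring homomorphism on $R$: the clean appearance of $p(v)$ as a middle factor is not automatic but a direct consequence of (i) the pairwise commutativity of the coordinates $q_{n_1},\dotsc,q_{n_2}$, which lets the product of powers be rearranged, and (ii) the centrality of the real coefficients of $p$, which lets the scalars $c_i$ migrate to their correct position.
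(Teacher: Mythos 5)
Your proposal is correct and follows essentially the same route as the paper: reduce to a single monomial times $p$, use the fixed substitution order to see that the factors from positions $n_1,\dotsc,n_2$ sit together, then use the pairwise commutativity of $q_{n_1},\dotsc,q_{n_2}$ and the centrality of the real coefficients to factor out $p(v)$ as a middle block. The paper phrases this by splitting the monomial as $m_1m_2m_3$ and writing $(mp)(v)=a\,m_1(v_1)\,(m_2p)(v_2)\,m_3(v_3)$ with $(m_2p)(v_2)=m_2(v_2)p(v_2)=0$, which is exactly your bookkeeping in a more compact notation.
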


\begin{proof}
Let us consider a monomial $m=am_1m_2m_3$ where $a\in \H$, $m_1=\prod_{j=1}^{n_1-1}{x_j}^{i_j}$, $m_2=\prod_{j=n_1}^{n_2}{x_j}^{i_j}$ and $m_3=\prod_{j=n_2+1}^{n}{x_j}^{i_j}$. Let $v_1=(q_1,\dotsc,q_{n_1-1})$, $v_2=(q_{n_1},
\dotsc,q_{n_2})$ and $v_3=(q_{n_2+1},\dotsc, q_n)$. Since $v_2 \in \H^{n_2-n_1+1}_c$, we have 
$(m_2p)(v_2)=m_2(v_2)p(v_2)=0$. Since $p$ is in the center of $R$, we have
$$(mp)(v)=(am_1m_2pm_3)(v)=a m_1(v_1) (m_2p)(v_2) m_3(v_3)=0 $$ As $R$ is additively spanned by monomials of the form of $m$, our proof is complete.
\end{proof}

\begin{thm} \label{thm:multisphere_is_algebraic}
    Let $v\in \H^n$. There exists a left ideal $J \subseteq R$ such that $B(v)=\V(J)$.
\end{thm}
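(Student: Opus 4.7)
The plan is to construct the left ideal $J$ explicitly so that its zero set $\V(J)$ equals $B(v)$. Let $v=(v_0,\dotsc,v_r)$ be the central presentation of $v$, and for each $i\geq 1$ write $v_i=A_i+B_it_i$ with $A_i,B_i\in\R^{k_i}$, $B_i\neq 0$, and $t_i\in\S$ (using part 5 of Lemma \ref{lem:quaternion_facts}; this is possible since $v_i\notin\R^{k_i}$). For each such $i$, fix an index $j_i^*$ with the $j_i^*$-th entry of $B_i$ nonzero. With the variable labelling of (\ref{eq:rename_variables}) and letting $q_{0,j}$ denote the $j$-th coordinate of $v_0$, define $J$ to be the left ideal in $R$ generated by
\begin{enumerate}[(i)]
    \item $f_j := x_{0,j}-q_{0,j}$ for $j=1,\dotsc,k_0$;
    \item $h_i := (x_{i,j_i^*}-A_{i,j_i^*})^2 + B_{i,j_i^*}^2$ for $1\leq i\leq r$;
    \item $g_{i,j} := B_{i,j_i^*}(x_{i,j}-A_{i,j})-B_{i,j}(x_{i,j_i^*}-A_{i,j_i^*})$ for $1\leq i\leq r$ and $j\in\{1,\dotsc,k_i\}\setminus\{j_i^*\}$.
\end{enumerate}

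The inclusion $\V(J)\subseteq B(v)$ is the easier direction. For $w\in\V(J)$, the generators $f_j$ force the $v_0$-block of $w$ to equal $v_0$. The equation $h_i(w)=0$ forces $(w_{i,j_i^*}-A_{i,j_i^*})^2=-B_{i,j_i^*}^2$, which in $\H$ means $w_{i,j_i^*}-A_{i,j_i^*}=B_{i,j_i^*}s_i$ for some $s_i\in\S$; the relations $g_{i,j}(w)=0$ then yield $w_{i,j}=A_{i,j}+B_{i,j}s_i$ for every other $j$, placing the $i$-th block of $w$ in $A_i+B_i\S=\S_{v_i}$. Thus $w\in B(v)$.

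The harder inclusion $B(v)\subseteq\V(J)$ requires verifying that \emph{every} element of $J$, and not merely each generator, vanishes on $B(v)$. For the generators of types (ii) and (iii) this follows immediately from Lemma \ref{lem:multiple_of_real}, since they are real polynomials in the variables of a single block $v_i$ and every $w\in B(v)$ has its $i$-th block lying in $\S_{v_i}\subseteq\H^{k_i}_c$. The main obstacle is the non-real generators $f_j=x_{0,j}-q_{0,j}$, to which Lemma \ref{lem:multiple_of_real} does not directly apply. To handle these I would argue directly: for a monomial $cx^I\in R$ a short computation, using that the variables commute with each other and with scalars in $\H$, yields
\[
\big((cx^I)f_j\big)(w) = c\,\big(Aq_{0,j}-q_{0,j}A\big)\,q_{0,j}^{\,i_{0,j}}\,B,
\]
where $A$ is the product (in the canonical order) of the substituted coordinates of $w$ preceding $x_{0,j}$ and $B$ is the corresponding product of those following it. Since for $w\in B(v)$ the factors appearing in $A$ all lie in $v_0$, which is central by assumption, $q_{0,j}$ commutes with $A$ and the commutator $Aq_{0,j}-q_{0,j}A$ vanishes, so $(cx^I)f_j$ vanishes on $B(v)$ as required.
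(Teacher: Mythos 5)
Your proof is correct and follows essentially the same route as the paper: you cut out $B(v)$ by the same kind of defining equations (fixing the $v_0$-block, one quadratic sphere equation per block, and real linear proportionality relations), and you use Lemma \ref{lem:multiple_of_real} to handle left multiples of the real generators, just with a slightly leaner generating set anchored at an index $j_i^*$ with $B_{i,j_i^*}\neq 0$. The only other difference is cosmetic: where the paper invokes \cite[Proposition 3.6]{GSV} to show that left multiples of $x_{0,j}-q_{0,j}$ vanish at points of $B(v)$, you verify this directly by the commutator computation on monomials (which, by additivity of evaluation, suffices), making the argument self-contained.
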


\begin{proof}
Let $v=(v_0,\dotsc,v_r)$ be the central presentation of $v$, where $v_i\in \H^{k_i}_c$. By Lemma \ref{lem:quaternion_facts}, for each $i$ there exist $A_i,B_i \in \R^{k_i}$ and $s_i \in \S$ such that $v_i= A_i+B_i s_i$, and we have

$$ B(v)= \{v_0\} \times (A_1+B_1 \S) \times \dotsc \times (A_r+B_r \S) .$$

Let us rename the variables $x_1,\dotsc,x_n$ as in (\ref{eq:rename_variables}).
Let $v_0=(q_1,\dotsc,q_{k_0})$.
Consider the following equations:
\begin{align}\label{eq:msphere_eqs}
\left \{ 
\begin{array}{ll}
(x_{i,j}-A_{i,j})B_{i,l}=(x_{i,l}-A_{i,l})B_{i,j} & \text{for } 1\leq i \leq r, \, 1\leq j,l \leq k_i \\
(x_{i,j}-A_{i,j})^2=-{B_{i,j}}^2 & \text{for } 1\leq i \leq r, \, 1\leq j \leq k_i \\
x_{0,j}=q_j & \text{for } 1\leq j \leq k_0
\end{array}
\right.
\end{align}
We will now show that these equations and their left multiples cut out the multisphere $B(v)$.

Consider the set $$G=\left \{(x_{i,j}-A_{i,j})B_{i,l}-(x_{i,l}-A_{i,l})B_{i,j},  (x_{ij}-A_{ij})^2+{B_{i,j}}^2, x_{0,j}-q_j \right \}$$
where $i,j,l$ are as in (\ref{eq:msphere_eqs}), and let $J$ be the left ideal generated by $G$. 

We will now prove that $B(v)=\V(J)$: Let $w\in \V(J)$. Then in particular, $w$ is in the zero set of $G$, so equations (\ref{eq:msphere_eqs}) hold. 
Let $w=(w_{i,j})$ with $0\leq i \leq r$ and $1\leq j \leq k_i$. We have $w_{0,j}=q_j$ for all $j$. Let $1\leq i \leq r$ and $1\leq j \leq k_i$, then by the second equation in (\ref{eq:msphere_eqs}), there exists $s_{i,j} \in \S$ such that $w_{i,j}=A_{i,j}+B_{i,j}s_{i,j}$. Plugging this equality in the first equation of (\ref{eq:msphere_eqs}), we get $ B_{i,j}B_{i,l}s_{i,j}= B_{i,l}B_{i,j}s_{i,l}$, so unless $B_{i,j}=0$ or $B_{i,l}=0$, we have $s_{i,j}=s_{i,l}$. We conclude that $(w_{i,1},\dotsc,w_{i,k_i}) \in A_i + B_i \S$. Hence, $w\in B(v)$.

For the converse direction, let us assume that $w\in B(v)$. Then we have 
$$w = (q_1,\dotsc, q_{k_0}, A_1 + B_1 s_1, \dotsc, A_r + B_r s_r)$$
for some $s_1,\dotsc, s_r \in \S$. Equations (\ref{eq:msphere_eqs}) hold for such $w$, so $p(v)=0$ for all $p\in G$. Let $g\in G$ and $p\in \H[x_1,\dt,x_n]$. If $g$ is a generator of the first or second kind, then by Lemma \ref{lem:multiple_of_real} we have $(pg)(w)=0$. If $g$ is a generator of the third kind, then $g=x_{0,j}-q_{0,j}$ for some $j$. By \cite[Proposition 3.6]{GSV}, $pg$ vanishes on $(C_{q_{0,j}})^{j-1}\times \{ q_{0,j} \} \times \H^{n-j}$, where $C_a=\{q\in \H \st qa=aq \}$. In particular, $(pg)(w)=0$. 
We have proved that all the left multiples of elements of $G$ vanish on $w$, therefore $w\in \V(J)$.
 \end{proof}

From Theorems \ref{thm:envelope} and \ref{thm:multisphere_is_algebraic} we conclude:
\begin{thm}
    For any $v\in \H^n$ we have $\mathcal H(v)=B(v).$
\end{thm}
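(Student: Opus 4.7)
The plan is to derive the equality $\mathcal H(v)=B(v)$ as a direct two-inclusion corollary of Theorems \ref{thm:blowup} and \ref{thm:multisphere_is_algebraic}, with essentially no further computation. The hardest work has already been packed into those two theorems: Theorem \ref{thm:blowup} gives the algebraic closure property of the blowup under vanishing, while Theorem \ref{thm:multisphere_is_algebraic} supplies an explicit ideal cutting out $B(v)$.

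For the inclusion $B(v) \subseteq \mathcal H(v)$, I would take any left ideal $I \subseteq R$ with $v \in \V(I)$ and invoke Theorem \ref{thm:blowup} to conclude $B(v) \subseteq \V(I)$. Intersecting over all such $I$ yields $B(v) \subseteq \mathcal H(v)$ by the definition (\ref{hull}).

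For the reverse inclusion $\mathcal H(v) \subseteq B(v)$, I would apply Theorem \ref{thm:multisphere_is_algebraic} to produce a left ideal $J \subseteq R$ with $B(v) = \V(J)$. The only thing that needs to be checked is that $J$ is one of the ideals appearing in the intersection defining $\mathcal H(v)$, i.e.\ that $v \in \V(J)$. But this is immediate from $v \in B(v)$, which in turn holds because if $v=(v_0,\dt,v_r)$ is the central presentation of $v$, then $v_i \in \S_{v_i}$ for every $1 \leq i \leq r$, so $v \in \{v_0\}\times \S_{v_1}\times\dt\times\S_{v_r} = B(v)$. Consequently $\mathcal H(v) \subseteq \V(J) = B(v)$, and the two inclusions together yield the theorem.

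The only potential obstacle is bookkeeping: one must confirm that the minor non-uniqueness in the central presentation (noted in the second footnote after (\ref{eq:B(v)})) does not affect the product $B(v)$, so that the statement is well-defined. This was already observed in that footnote, so nothing further is required.
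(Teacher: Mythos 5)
Your proof is correct and follows exactly the same route the paper (implicitly) intends: the inclusion $B(v)\subseteq\mathcal H(v)$ from Theorem \ref{thm:blowup} applied to each ideal in the intersection, and the reverse inclusion from Theorem \ref{thm:multisphere_is_algebraic} together with the observation $v\in B(v)$ (you have also silently and correctly read the paper's typo $B(v)=\I(J)$ as $B(v)=\V(J)$). The paper gives no written proof at all, simply stating ``From Theorems \ref{thm:blowup} and \ref{thm:multisphere_is_algebraic} we conclude,'' so your write-up merely makes explicit what the authors left to the reader.
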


\section{A new proof of Theorem \ref{thm:central_zeros}} \label{sec:proof}
Using the results in the preceding section, we now present a new proof of Theorem \ref{thm:central_zeros}. The idea of the proof is that the enveloping multisphere allows us to obtain from a point $v\in \V(I)$ some points in $\V_c(I)$ in one step, instead of many steps of construction of $1$-multispheres as in the previous proof in \cite{AP24}.

\begin{customthm}{\ref{thm:central_zeros}}
Let $I$ be a left ideal of $R=\H[x_1,\dotsc,x_n]$. If a polynomial $f\in R$ vanishes on $\V_c(I)$, then $f$ vanishes on $\V(I)$.
\end{customthm}

\begin{proof}
Let us assume that $f\in R$ vanishes on $\V_c(I)$, and let $v \in \V(I)$. If $v\in \V_c(I)$ then clearly, $f(v)=0$. Otherwise, let $v=(v_0,\dotsc,v_r)$ be the central presentation of $v$. By Theorem \ref{thm:envelope}, $B(v)\subseteq \V(I)$. For each $1 \leq i \leq r$, let $v_i=A_i + B_i s_i$  with $A_i, B_i \in \R^{k_i}$ and $s_i \in \S$. Since $v_i \notin \R^{k_i}$, we have $B_i \neq 0$. Consider the following set of $2^r$ points:
$$Q=\{(v_0, A_1\pm B_1 s_0, A_2 \pm B_2 s_0, \dotsc A_r \pm B_r s_0)\}$$
For all $1\leq i \leq r$ we have by Lemma \ref{lem:quaternion_facts}, $A_i\pm B_i s_0 \in A_i + B_i \S = \S_{v_i}$, so $Q \subseteq B(v)$, hence $Q \subseteq \V(I)$. On the other hand, $Q \subseteq (L_{s_0})^n \subseteq \H^n_c$. We conclude that $Q \subseteq \V_c(I)$. Therefore, $f$ vanishes on $Q$. By Lemma \ref{lem:vanish_on_sphere}, $f$ vanishes on $B(v)$. As $v\in B(v)$, we conclude that $f(v)=0$. \qedhere
\end{proof}

\section{A counterexample} \label{sec:counterexample}

Let $D$ be any division algebra. As in the case $D=\H$, we may consider the central polynomial ring $R_D=D[x_1,\dotsc,x_n]$ and the spaces $D^n$ and $D^n_c=\{ (q_1,\dotsc,q_n)\in D^n \, | \, q_i q_j=q_jq_i \text{ for all } 1\leq i,j \leq n \}$.
We may also define the vanishing sets $\V(I), \V_c(I)$ of an ideal $I\subseteq R_D$, as in the quaternionic case.

In \cite{AP24} the following question was asked: Given a left ideal  $I \subseteq R_D$ and a polynomial $f\in R_D$ which vanishes on $\V_c(I)$, does $f$ necessarily vanish on $\V(I)$?

In this section we shall prove that the answer to this question is negative, by constructing a quaternion algebra $Q$ and an ideal $I$ in $R_Q$ such that $\V_c(I)$ is empty, but $\V(I)$ is not. Therefore, any polynomial $f$ which does not vanish on $\V(I)$ will provide a counterexample.

In order to understand the argument of this construction, one must be familiar with the algebraic theory of quadratic forms over fields. We refer the reader to \cite{LAM} for a thorough introduction.

Recall that given a field $F$ of characteristic not 2, a quaternion algebra $Q$ takes the form of a Hilbert symbol, which means $Q=(\alpha,\beta)_F=F\langle i,j | i^2=\alpha, j^2=\beta, ji=-ij \rangle$ for some $\alpha,\beta \in F^*$.

\begin{prop}\label{counter_ex}
    Let $k$ be a field of characteristic not 2 with no square root of $-1$ (such as $\mathbb{R}$). Set $F=k(\alpha,\beta,t)$ to be the function field in three algebraically independent variables over $k$, and $Q=(\alpha,\beta)_F$. Then the polynomial $p=x^2-\alpha+t(y^2-\beta)\in Q[x,y]$ has no roots in $Q_c^2$.
    
\end{prop}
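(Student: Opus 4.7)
The plan is to reduce the claim to a valuation-theoretic parity question on $F$. Since $Q = (\alpha, \beta)_F$ is a division algebra (the generic quaternion symbol over a purely transcendental extension), any commuting pair $(q_1, q_2) \in Q_c^2$ lies either in $F$ or in a common quadratic subfield $K = F(u) \subset Q$, with $u$ a pure quaternion satisfying $u^2 = d$ for some nonzero value $d$ of the pure norm form $\langle \alpha, \beta, -\alpha\beta\rangle_F$. Writing $q_i = a_i + b_i u$ with $a_i, b_i \in F$, the equation $p(q_1, q_2) = 0$ decomposes into the ``real part''
\[(a_1^2 + b_1^2 d) + t(a_2^2 + b_2^2 d) = \alpha + t\beta\]
and the ``imaginary part'' $a_1 b_1 + t a_2 b_2 = 0$.

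Consider the discrete valuation $v$ on $F = k(\alpha, \beta)(t)$ with uniformizer $s = \alpha + t\beta$, residue field $F_0 = k(\alpha, \beta)$, and $\bar t = -\alpha/\beta$. Three facts drive the proof: (i) $v(\alpha + t\beta) = 1$, while $v(f^2)$ and $v(tf^2)$ are always even; (ii) $\alpha/\beta$ is not a square in $F_0$ (by algebraic independence of $\alpha, \beta$), which rules out cancellation between terms $f^2$ and $tg^2$ of equal valuation; (iii) the form $\langle \alpha, \beta, -\alpha\beta\rangle_{F_0}$ is anisotropic (as the pure norm form of the division algebra $(\alpha, \beta)_{F_0}$), hence $v(d)$ is always even and the normalized residue $\overline{d/s^{v(d)}}$ is a nonzero value of this form over $F_0$.

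The main obstacle is the key arithmetic lemma: $\alpha\beta$ is not a value of $\langle \alpha, \beta, -\alpha\beta\rangle_{F_0}$ modulo squares; equivalently, $F_0' := F_0(\sqrt{\alpha\beta})$ does not split $(\alpha, \beta)_{F_0}$. Setting $\gamma = \sqrt{\alpha\beta}$, one has $\beta = \gamma^2/\alpha$ in $F_0'$, so $F_0' = k(\alpha, \gamma)$ is again purely transcendental over $k$, and $(\alpha, \beta)_{F_0'}$ is Brauer-equivalent to $(\alpha, -1)_{F_0'}$. The latter is split iff either $\sqrt{-1} \in F_0'$ or $\alpha$ is a sum of two squares in $F_0'$. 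The first fails because neither $-1$ nor $-\alpha\beta$ is a square in $F_0$ (the hypothesis on $k$ together with algebraic independence and unique factorization in $k[\alpha,\beta]$). The second fails by applying the $\alpha$-adic valuation to $F_0' = k(\alpha, \gamma)$: $v_\alpha(\alpha) = 1$ is odd, while $v_\alpha(\xi^2 + \eta^2)$ is always even, since cancellation of leading terms would force $-1$ to be a square in the residue field $k(\gamma)$, and hence in $k$.

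Given this lemma, a case analysis on the vanishing of $b_1, b_2$ finishes the proof. When $b_1 = b_2 = 0$ the real-part equation reduces to $a_1^2 + t a_2^2 = \alpha + t\beta$ and contradicts (i)--(ii) directly. When exactly one of $b_1, b_2$ vanishes, the imaginary part forces the corresponding $a_j$ to vanish, and matching the odd right-hand valuation in the real part would require the $f^2$-term and the $tg^2d$-term to cancel; this forces $\overline{d/s^{v(d)}} \in \alpha\beta \cdot F_0^{*2}$, contradicting the key lemma. When both $b_1, b_2$ are nonzero, solving $a_1 = -ta_2b_2/b_1$ and substituting yields
\[(b_1^2 + t b_2^2)(d b_1^2 + t a_2^2) = b_1^2 (\alpha + t\beta),\]
in which each left-hand factor has even $v$-valuation (by (ii), (iii), and the key lemma, which preclude the relevant cancellations), whereas the right-hand side has $v$-valuation $1$.
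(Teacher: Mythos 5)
Your proof is correct, but it takes a genuinely more hands-on route than the paper. The paper translates the existence of a root with both coordinates in a subfield $K$ into isotropy over $K$ of $\pi=\langle 1,t,-(\alpha+\beta t)\rangle$, passes to the Pfister form $\langle\!\langle -t,\alpha+\beta t\rangle\!\rangle$, concludes that $K$ splits $A=(-t,\alpha+\beta t)_F$, invokes Albert's criterion to get isotropy over $F$ of the Albert form of $A\otimes Q$, and kills that with a single application of residue forms (Springer's theorem) at the $(\alpha+\beta t)$-adic valuation. You instead write the putative commuting root explicitly as $q_i=a_i+b_iu$ in a quadratic subfield $F(u)$ with $u^2=d$ a value of $\langle\alpha,\beta,-\alpha\beta\rangle$, split the equation into its two components, and run a parity-of-valuation case analysis at the same valuation; your key lemma, that $F_0(\sqrt{\alpha\beta})$ does not split $(\alpha,\beta)_{F_0}$, is exactly equivalent to the anisotropy of the paper's second residue form $\langle\alpha\beta,-\alpha,-\beta,\alpha\beta\rangle$, and both arguments bottom out in the fact that $-1$ is not a square in $k$. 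What your version buys is elementariness: no biquaternion algebras or Albert forms, and Springer's theorem replaced by explicit no-cancellation arguments (your facts (i)--(iii)); what it costs is length and a three-way case analysis, plus the need (which you do address) to know that $Q$ and $(\alpha,\beta)_{F_0}$ are division, so that commuting pairs lie in a quadratic subfield and the relevant residue form is anisotropic. One small slip: in your final display the right-hand side has $v$-valuation $2v(b_1)+1$, i.e.\ odd, not necessarily $1$; the parity contradiction you want is unaffected.
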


\begin{proof}
    The claim is that $p$ has no root $(r,s)$ where $r$ and $s$ belong to a subfield $K$ of $(\alpha,\beta)_F$.
    Assume the contrary.
    Note that $p=x^2+ty^2+(-\alpha-\beta t)$ and thus $p$ is obtained from $\pi=x^2+ty^2+(-\alpha-\beta t)z^2$ by plugging in $z=1$. The latter is written (using standard notation for quadratic forms) as $\pi=\langle 1,t,-(\alpha+\beta t) \rangle$. The existence of a root in $K$ for $p$ implies that $\pi$ is isotropic over $K$ (which means it has a root different from the zero vector). Now, $\pi$ is a subform of $\tau=\langle 1,t,-(\alpha+\beta t),t(\alpha+\beta t) \rangle$, commonly denoted by $\langle \! \langle -t,\alpha+\beta t \rangle \! \rangle$ in the language of Pfister forms. Since $\pi$ is isotropic over $K$, so is the latter. The form $\tau$ is the norm form of the quaternion algebra $A=(-t,\alpha+\beta t)_F$, which means that $A$ is split by $K$. Therefore, the biquaternion algebra $A \otimes_F Q$ is not a division algebra, and so the underlying Albert form $\varphi=\langle -t,\alpha+\beta t,t(\alpha+\beta t),-\alpha,-\beta,\alpha \beta \rangle$ is isotropic.
    Consider the $(\alpha+\beta t)$-adic valuation. The two residue forms are thus $\langle 1,-\alpha \beta\rangle $ and $\langle \alpha \beta,-\alpha,-\beta,\alpha \beta \rangle$ (the computation takes into account that in the residue field $k(\alpha,\beta)$, the element $t$ is replaced by $-b^{-1} a$, and then whenever $t$ appears as a coefficient, it can be multiplied by a square and we get $-\alpha \beta$).
    The form $\langle 1,-\alpha \beta \rangle$ is anisotropic (= not isotropic). The form $\langle \alpha \beta,-\alpha,-\beta,\alpha \beta \rangle=\alpha \beta \langle 1,1,-\alpha,-\beta \rangle$ is anisotropic too, because $-1$ is not a square in $k$, a contradiction.
\end{proof}

\begin{prop} \label{central_pol_ideal}
Given a central polynomial $p=\sum_{r,s} b_{r,s} x^r y^s$ in $D[x,y]$ for any division ring $D$ (i.e., $p \in F[x,y]$ where $F=Z(D)$), all the polynomials in the ideal generated by $p$ in $D[x,y]$ vanish at any root $(\alpha,\beta)$ of $p$.
\end{prop}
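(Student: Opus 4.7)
The plan is to exploit the fact that $p$ is central in the whole ring $D[x,y]$, not merely in $F[x,y]$. Indeed, each coefficient $b_{r,s}$ lies in $F = Z(D)$ and so commutes with every element of $D$, while the indeterminates $x,y$ commute with all scalars. Hence $p \in Z(D[x,y])$, so the left, right, and two-sided ideals generated by $p$ all coincide, and every element of this ideal is a finite sum of left multiples $q \cdot p$ with $q \in D[x,y]$. By linearity of the evaluation map, it suffices to verify that $(qp)(\alpha,\beta) = 0$ for each such $q$.

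For the main calculation, I would write $q = \sum_{k,l} a_{k,l} x^k y^l$ with $a_{k,l} \in D$. Using the centrality of the $b_{r,s}$ and the commutativity of $x$ and $y$, this gives $qp = \sum a_{k,l} b_{r,s} x^{k+r} y^{l+s}$, and evaluating at $(\alpha,\beta)$ yields the double sum $\sum a_{k,l} b_{r,s} \alpha^{k+r} \beta^{l+s}$. The key step is to reshape this sum so as to expose $p(\alpha,\beta) = 0$ as an inner factor: split $\alpha^{k+r} = \alpha^k \alpha^r$ and $\beta^{l+s} = \beta^l \beta^s$, slide each central $b_{r,s}$ past $a_{k,l}$ and past $\alpha^k$, and commute the two powers $\beta^l, \beta^s$ of the same variable to obtain
\[
(qp)(\alpha,\beta) = \sum_{k,l} a_{k,l}\,\alpha^k\,\Bigl(\sum_{r,s} b_{r,s}\,\alpha^r \beta^s \Bigr)\,\beta^l = \sum_{k,l} a_{k,l}\,\alpha^k \cdot p(\alpha,\beta) \cdot \beta^l = 0.
\]

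The one point that requires any care, and the closest thing to an obstacle, is that $\alpha$ and $\beta$ need not commute in $D$, so evaluation at $(\alpha,\beta)$ is not a ring homomorphism and one cannot simply assert $(qp)(\alpha,\beta) = q(\alpha,\beta)\,p(\alpha,\beta)$. The rearrangement above sidesteps this issue by performing only two legitimate moves — sliding the central coefficients $b_{r,s}$ past anything, and commuting different powers of a single variable — and never attempting to swap $\alpha$ with $\beta$. Beyond this bookkeeping, the proof is a direct computation with no substantive difficulty.
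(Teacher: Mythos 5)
Your proof is correct and essentially identical to the paper's: both write a general element of the ideal as $qp$, expand to the double sum $\sum a_{k,\ell} b_{r,s}\,\alpha^{k+r}\beta^{\ell+s}$, and slide the central coefficients $b_{r,s}$ past $a_{k,\ell}\alpha^k$ to factor out $p(\alpha,\beta)=0$ in the middle. Your explicit remark that evaluation is not a ring homomorphism since $\alpha,\beta$ need not commute is the correct reason this rearrangement requires care, and the paper handles it the same way.
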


\begin{proof}
    The polynomial $p$ is central, so its left ideal is two-sided. Take a polynomial $h=qp$ in this ideal, $q=\sum_{k,\ell} a_{k,\ell} x^k y^\ell$. Then $h=\sum_{k,\ell,r,s} a_{k,\ell} b_{r,s} x^{k+r} y^{\ell+s}$, and so $h(\alpha,\beta)=\sum_{k,\ell,r,s} a_{k,\ell} b_{r,s} \alpha^{k+r} \beta^{\ell+s}$. But $b_{r,s}$ are central, and so $h(\alpha,\beta)=\sum_{k,\ell} a_{k,\ell} \alpha^k (\sum_{r,s} b_{r,s} \alpha^{r} \beta^{s})\beta^{\ell}=\sum_{k,\ell} a_{k,\ell} \alpha^k p(\alpha,\beta)\beta^{\ell}=0$.
\end{proof}

Let $p$ be the polynomial from Proposition \ref{counter_ex}, and let $I$ be the ideal generated by $p$. Clearly, $p$ vanishes at $(i,j)$. By proposition \ref{central_pol_ideal}, every polynomial in $I$ vanishes at $(i,j)$ as well. The polynomial $f=x^2$ does not vanish at this point, but it trivially vanishes on $\V_c(I)$ (because $\V_c(I)=\emptyset$). This provides a negative answer to the question in \cite{AP24}.

The field $F$ in the above example is of transcendence degree $3$ over the base field $k$. We do not know whether one could construct a similar example where the transcendence degree of the underlying field over its prime subfield is less than 3. More generally, a natural problem is to characterize the division rings over which the analogue of Theorem \ref{thm:central_zeros} holds. These questions remain open.

\section*{Acknowledgements}
The authors thank Jean-Pierre Tignol for helping formulate the argument in the proof of  Proposition \ref{counter_ex}.

\end{document}